\def\({\left(}
\def\){\right)}
\newtheorem{lema}{Lemma}[section]
\newtheorem*{teorema*}{Theorem}
\newtheorem{lemma}{Lemma}[section]
\newtheorem{theorem}[lema]{Theorem}
\newtheorem{definition}[lema]{Definition}
\hfill \fbox{}}
\hfill \fbox{}}
\def\beq{\begin{equation}}
\def\eeq{\end{equation}}
\def\epsilon{\varepsilon}
\begin{document}

\title{Li-Yorke chaos for Composition operators on Orlicz-Lorentz space }
\author{Rajat Singh}
\address{Department of Mathematics, University of Jammu,
Jammu 180006, INDIA.} \email{rajat.singh.rs634@gmail.com}

\author{Aditi Sharma}
\address{Department of Mathematics, University of Jammu,
Jammu 180006, INDIA.} \email{aditi.sharmao@gmail.com}

\author{Romesh Kumar}
\address{Department of Mathematics, University of Jammu,
Jammu 180006, INDIA.} 
\email{romeshmath@gmail.com}

\thanks{The first author is supported by UGC under the scheme of JRF}
\subjclass{Primary }
\keywords{Composition
operators, Li-Yorke chaos, Orlicz-Lorentz space, Lorentz space.}
\date{September 2022}

\begin{abstract} 
In this paper, we study the Li-Yorke chaotic composition operators on Orlicz-Lorentz spaces.  In fact necessary and sufficient conditions are given for Li-Yorke chaotic composition operator $C_{\tau}$ on $\mathbb{L}^{\varphi,h}(\mu)$. Further, we present the equivalent conditions for $C_{\tau}$ to be Li-Yorke chaotic. We extends the result of \cite{NC19} to Orlicz-Lorentz spaces.
\end{abstract}

\maketitle

\section{Introduction and Preliminaries}
Over the last two decades various authors have explored chaotic operators intensively. The notion of ``Chaos" was first introduced into mathematical literature in the context of interval map by Li and Yorke \cite{Li} and became most famous. Godefroy and Shapiro \cite{god} were the first to introduce chaos into linear dynamics by using Devaney's notion of chaos. Other terms related to chaos include Li-Yorke chaos, distributional chaos, specification property, etc. There are several fascinating papers of Li-Yorke chaos and distributional chaos for linear operator see (\cite{NC13},\cite{NC15},\cite{NC19},\cite{Li}) and reference therein. Further in \cite{Bam}, the Li-Yorke chaos for composition operator on Orlicz space were studied. Since the Orlicz-Lorentz spaces offer a common generalization of Orlicz spaces and Lorentz spaces, so it is natural to expand the study of composition operators to a more general class.\\
The paper is structured as follows: Section 1 is introductory and we cite certain definitions and results which will be used throughout this paper. In Section 2, we explore the Li-Yorke chaotic composition operators on Orlicz-Lorentz spaces.\\
Now, we recall some basic facts about the Orlicz-Lorentz space which will be useful throughout this paper. For more details on Orlicz-Lorentz space we refer to \cite{BS}, \cite{Cui}, \cite{LOR} and \cite{Rao}.
Let $(X,\mathbb{A},\mu)$ be a measure space with positive measure and $\mathbb{L}^{0}$ represent the space of all equivalence classes of measurable functions on $X$ which are identified as $\mu$-a.e.
We now define the distribution function $\mu_{g}$ of $g\in L^{0}$ on $(0,\infty)$ as
$$\mu_{g}(\lambda)=\mu\{x \in X:|g(x)|>\lambda\},$$
and the non-increasing rearrangement of $g$ on $(0,\infty)$ is defined as
$$g^{*}(t)=\inf\{\lambda>0:\mu_{g}(\lambda)\leq t\}=\sup\{\lambda>0:\mu_{g}(\lambda)>t\}.$$
A function $\varphi:[0,\infty) \to [0,\infty]$ is an Orlicz function if it is convex function with $\varphi(0)=0$ and $\varphi(s) \to \infty~\mbox{as}~s \to \infty$ such that $\varphi(s)<\infty$ for some $0<s<\infty$. Let $K=[0,\mu(X)]$. Let $h:K \to (0,\infty)$ be locally integrable and non-increasing function with respect to Lebesgue measure which is known as weight function. 
For a given $\varphi$ and $h$, the space
$$\mathbb{L}^{\varphi,h}(\mu)=\{ g\in \mathbb{L}^{0}:I_{\varphi,h}(\lambda g)<\infty~ \mbox{for some}~\lambda>0\},$$
where $$I_{\varphi,h}(\lambda g)=\int_{K}\varphi(\lambda g^{*}(t))h(t)dt.$$ is known as Orlicz-Lorentz space.
Now, we define the norm of $g$ as
 $$||g||_{\varphi,h}=\inf\{\lambda>0:I_{\varphi,h}(g/\lambda)\leq 1\}.$$ 
The space $\mathbb{L}^{\varphi,h}(\mu)$ with the above norm is a Banach space. Moreover, if $A\in \mathbb{A}$ with $0<\mu(A)<\infty$, then $||\chi_{A}||_{\varphi,h}=\frac{1}{\varphi^{-1}\(\frac{1}{\int_{0}^{\mu(\tau^{-n}(A))}h(t)dt}\)}$. If $h=1$, then $\mathbb{L}^{\varphi,h}$ represents the Orlicz space $\mathbb{L}^{\varphi}(\mu)$ and when $\varphi(t)=t$, then it represents the Lorentz space $\mathbb{L}^{w}(\mu)$ see \cite{ROL}.

First of all, we now state some growth conditions on the $\varphi$, the Orlicz function $\varphi$ satisfies the $\Delta_{2}$-condition(for large s), if there is a positive constant $M$(a positive constant M and $s_{o}>0$ with $\varphi(s_{o})<\infty$) such that
\begin{equation}\label{eqdelta}
	\varphi(2s)\leq M\varphi(s), ~\mbox{for all}~ s>0.
\end{equation}
We suppose that $\varphi$ is left continuous at $b_{\phi}$, where
$$b_{\phi}=\sup\{s>o:\phi(s)<\infty\}.$$
Also, define $$a_{\phi}=\inf\{s>0:\phi(s)>0\}.$$
Throughout this paper, we take $(X,\mathbb{A},\mu)$ to be measure space and let $\tau:X \to X$ be non-singular measurable transformation. Then $\tau$ induces a operator $C_{\tau}$ from $\mathbb{L}^{\varphi,h}$ into $L^{0}(X)$ defined as 
$$C_{\tau}g=g \circ \tau,~\forall~g \in \mathbb{L}^{\varphi}(\mu) $$
The non-singularity of $\tau$ assures that $C_{\tau}$ is well defined. Now, we state the lemma which will be used in the proof of Theorem\ref{thrm2.3}.
\begin{lemma}\label{lem1} 
Let $ \tau$ be measurable non-singular transformation and $\varphi$ be an Orlicz function which satisfies the $\Delta_{2}$-condition for all $s>0$. Then the following are equivalent:
\begin{itemize}
\item[(a)] There exist some $k>0$ and $A\in \mathbb{A}$,
      $$\frac{1}{\varphi^{-1}\(\frac{1}{\int_{0}^{\mu(\tau^{-n}(A))}h(t)dt}\)} \leq k \frac{1}{\varphi^{-1}\(\frac{1}{\int_{0}^{\mu(A)}h(t)dt}\)}.$$
\item[(b)] There exist some $k>0$ and $A\in \mathbb{A}$,
   $$\int_{0}^{\mu(\tau^{-n}(A))}h(t)dt \leq k\int_{0}^{\mu(A)}h(t)dt.$$
	\end{itemize}
\end{lemma}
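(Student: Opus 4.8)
The plan is to prove the equivalence $(a)\Leftrightarrow(b)$ by exploiting the monotonicity of the Orlicz function $\varphi$ and its inverse, which lets me strip the outer $\varphi^{-1}$ wrapping off the inequality in $(a)$ and reduce it to the purely measure-theoretic inequality in $(b)$. The key observation is that since $\varphi$ is convex, non-decreasing with $\varphi(0)=0$, its generalized inverse $\varphi^{-1}$ is non-decreasing, and the $\Delta_2$-condition (for all $s>0$) guarantees the doubling estimate $\varphi(2s)\leq M\varphi(s)$, which is what ultimately converts a multiplicative constant $k$ appearing outside $\varphi^{-1}$ into a multiplicative constant appearing inside the integral, and vice versa.

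First I would show $(b)\Rightarrow(a)$, which I expect to be the more straightforward direction. Starting from $\int_{0}^{\mu(\tau^{-n}(A))}h(t)\,dt \leq k\int_{0}^{\mu(A)}h(t)\,dt$, I would take reciprocals (reversing the inequality), apply $\varphi^{-1}$ using its monotonicity, and then absorb the constant $k$ inside $\varphi^{-1}$ into a new multiplicative constant outside by invoking the $\Delta_2$-condition. Concretely, the $\Delta_2$-condition yields, for the inverse function, an estimate of the form $\varphi^{-1}(\lambda u) \leq C_\lambda\,\varphi^{-1}(u)$ for fixed scaling $\lambda$; iterating $\varphi(2s)\leq M\varphi(s)$ controls $\varphi^{-1}$ under multiplication by any constant, which is precisely the lever needed to move the factor $k$ from inside to outside and produce the bound in $(a)$ with a possibly different constant.

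For the converse $(a)\Rightarrow(b)$, I would run the same chain in reverse: starting from the inequality in $(a)$, I apply $\varphi$ to both sides (again using monotonicity of $\varphi$), which removes the $\varphi^{-1}$ and leaves reciprocals of the integrals; I then take reciprocals once more and use the $\Delta_2$-condition to consolidate the constant into the single factor $k$ in $(b)$. The role of the $\Delta_2$-condition in both directions is to ensure that the constants stay finite and do not depend on $n$ or on the particular set $A$, so that the quantified statements ``there exist $k>0$ and $A\in\mathbb{A}$'' are genuinely equivalent.

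The main obstacle I anticipate is handling the constant transfer cleanly across the nonlinear map $\varphi^{-1}$: the inequality $\varphi(2s)\leq M\varphi(s)$ controls doubling, but to absorb an arbitrary constant $k$ I must iterate to get $\varphi(2^m s)\leq M^m \varphi(s)$ and then choose $m$ so that $2^m$ dominates $k$, translating this into the corresponding estimate for $\varphi^{-1}$. Care is also needed because $\varphi$ may take the value $+\infty$ (so $b_\varphi<\infty$ is possible) and may vanish near $0$ (so $a_\varphi>0$), which means $\varphi^{-1}$ is only a generalized inverse; I would therefore work with the left-continuity assumption at $b_\varphi$ and the definitions of $a_\varphi, b_\varphi$ to justify that the reciprocals of the integrals land in the range where $\varphi$ and $\varphi^{-1}$ behave as genuine inverses, ensuring every application of monotonicity and the $\Delta_2$-estimate is valid.
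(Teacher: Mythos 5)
Your argument is correct in substance, but it is worth noting that the paper itself does not prove this lemma at all: it simply asserts that the statement ``follows directly from [Theorem 2.1]{ROL}'' together with the $\Delta_2$-condition, so your self-contained derivation is genuinely different from (and more informative than) what the paper offers. Your reduction is the right one: writing $u=\int_{0}^{\mu(\tau^{-n}(A))}h(t)\,dt$ and $v=\int_{0}^{\mu(A)}h(t)\,dt$, condition (a) is $\varphi^{-1}(1/v)\leq k\,\varphi^{-1}(1/u)$ and condition (b) is $u\leq kv$, i.e.\ $1/v\leq k\,(1/u)$, so the whole lemma is the statement that a multiplicative constant can be passed through $\varphi^{-1}$ in both directions. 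One refinement: for $(b)\Rightarrow(a)$ you do not actually need $\Delta_2$ at all, since $\varphi$ convex with $\varphi(0)=0$ makes $\varphi^{-1}$ concave with $\varphi^{-1}(0)=0$, whence $\varphi^{-1}(ku)\leq k\,\varphi^{-1}(u)$ for $k\geq1$ directly; the $\Delta_2$-condition is only genuinely needed for $(a)\Rightarrow(b)$, where you must dominate $\varphi(ks)$ by $M^{m}\varphi(s)$ with $2^{m}\geq k$, exactly as you describe. Your worry about the generalized inverse is also dissolved by the hypothesis itself: if $\varphi(2s)\leq M\varphi(s)$ holds for \emph{all} $s>0$, then $\varphi(s)=0$ for some $s>0$ would force $\varphi\equiv0$ on $(0,\infty)$ and $\varphi(s)=\infty$ for some $s$ would propagate downward, so $a_{\varphi}=0$ and $b_{\varphi}=\infty$ and $\varphi$ is a genuine continuous strictly increasing bijection of $[0,\infty)$, making every application of $\varphi\circ\varphi^{-1}=\mathrm{id}$ legitimate. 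With these two points made explicit your outline compiles into a complete proof, and it has the advantage over the paper's citation of showing exactly where (and only where) the $\Delta_2$-condition enters.
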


The proof of the lemma follows directly from the \cite[Theorem 2.1]{ROL}, by using the fact that $ \varphi$ satisfies the $\Delta_{2}$-condition for large $s>0$.
\begin{definition}\cite[Page 1]{Berm}
	A continuous map $g:(M,d) \to (M,d)$  is said to be Li-Yorke chaotic if there exists an uncountable set $S\subset M$  such that each pair of distinct points $p,q \in S$ is a Li-Yorke pair for $g$ i.e.,
	$$\displaystyle\lim_{n\to \infty}\inf d(g^{n}(p), g^{n}(q)) = 0 ~\mbox{and}~ \displaystyle\lim_{n\to \infty}\sup d(g^{n}(p), g^{n}(q)) > 0 ,$$
	where $(M,d)$ is a metric space.
\end{definition}
We say that $g$ is densely (generically) Li-Yorke chaotic whenever $S$ can be chosen to be dense (residual) in $M$. 
\begin{definition}\cite[Page 47]{B2}
	\begin{itemize}
		\item[(a)] If  $T$ is a linear operator and a vector $z \in X$, then we say that $z$ is an irregular vector for $T$ if 
		$$\displaystyle\lim_{n \to \infty} \inf||T^{n}z||=0~\mbox{and}~\displaystyle\lim_{n \to \infty} \sup||T^{n}z||=\infty.$$
		\item[(b)] If $T$ is a linear operator and a vector $z \in X$, then we say that $z$ is semi-irregular vector for $T$ if 
		$$\displaystyle\lim_{n \to \infty} \inf||T^{n}z||=0~\mbox{and}~\displaystyle\lim_{n \to \infty} \sup||T^{n}z||>0.$$ 
	\end{itemize}
\end{definition}

Following result gives the equivalent condition for any continuous linear operator $T$ on any Banach space to be Li-Yorke. 
\begin{theorem}\cite[Theorem 9]{NC15}\label{prop}
	Let $X$ be a Banach space and $T:X \to X$ be a bounded linear operator. Then the following are equivalent
	\begin{itemize}
		\item[(i)] $T$ is Li-Yorke chaotic.
		\item[(ii)] $T$ admits a semi-irregular vector.
		\item[(iii)] $T$ admits irregular vector. 
	\end{itemize} 
\end{theorem}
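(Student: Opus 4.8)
The plan is to exploit the linear structure, under which the metric is $d(u,v)=\|u-v\|$ and $T^{n}p-T^{n}q=T^{n}(p-q)$. Consequently a pair $(p,q)$ is a Li--Yorke pair for $T$ if and only if the single vector $z=p-q$ satisfies $\liminf_{n}\|T^{n}z\|=0$ and $\limsup_{n}\|T^{n}z\|>0$, i.e.\ $z$ is semi-irregular. This observation already yields the easy implications. For (i)$\Rightarrow$(ii) one picks any two distinct points $p,q$ in an uncountable scrambled set $S$ and sets $z=p-q$. For (ii)$\Rightarrow$(i), given a semi-irregular $z$ (necessarily $z\neq 0$), the uncountable set $S=\{\lambda z:\lambda\in[0,1]\}$ is scrambled, since for distinct scalars $\lambda_{1},\lambda_{2}$ one has $\liminf_{n}\|T^{n}(\lambda_{1}z-\lambda_{2}z)\|=|\lambda_{1}-\lambda_{2}|\liminf_{n}\|T^{n}z\|=0$ and likewise $\limsup_{n}\|T^{n}(\lambda_{1}z-\lambda_{2}z)\|>0$. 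Finally (iii)$\Rightarrow$(ii) is immediate, because an irregular vector has $\limsup_{n}\|T^{n}z\|=\infty>0$.

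It therefore remains to prove the only substantial implication, (ii)$\Rightarrow$(iii): every semi-irregular vector gives rise to an irregular one. Let $z$ be semi-irregular. If $\limsup_{n}\|T^{n}z\|=\infty$ then $z$ is already irregular and we are done, so assume $0<c:=\limsup_{n}\|T^{n}z\|<\infty$ while $\liminf_{n}\|T^{n}z\|=0$. From these two facts I would extract an increasing sequence $(a_{k})$ of times with $\|T^{a_{k}}z\|\to 0$ as fast as desired, and an increasing sequence $(b_{k})$ with $\|T^{b_{k}}z\|\ge c/2$.

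The heart of the argument is to assemble an irregular vector as a convergent series $w=\sum_{k\ge 1}\alpha_{k}\,T^{a_{k}}z$, choosing the indices $a_{k}$ and the scalars $\alpha_{k}$ by an inductive (diagonal) procedure. Convergence is ensured by taking $\|T^{a_{k}}z\|$ small enough relative to $|\alpha_{k}|$ that $\sum_{k}|\alpha_{k}|\,\|T^{a_{k}}z\|<\infty$. To force $\limsup_{n}\|T^{n}w\|=\infty$, I would at stage $k$ single out a time $N=b_{j}-a_{k}$, so that the $k$-th term contributes $\|\alpha_{k}T^{b_{j}}z\|\ge|\alpha_{k}|\,c/2$, and let $|\alpha_{k}|\to\infty$; to force $\liminf_{n}\|T^{n}w\|=0$ I would use the decay times to make the whole series small simultaneously. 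The main obstacle, and where all the care is needed, is the control of the \emph{cross terms} $\sum_{k'\neq k}\alpha_{k'}T^{N+a_{k'}}z$ at these designated times: the later blocks ($k'>k$) are tamed by choosing $\|T^{a_{k'}}z\|$, hence the tail, negligibly small after $N$ has been fixed, while the finitely many earlier blocks must be dominated by taking $|\alpha_{k}|$ large. Executing this bookkeeping consistently, so that the growth times and the decay times can be interleaved without the two requirements interfering, is the technical crux. (Alternatively, one may phrase the conclusion through the Baire category theorem: the sets $\{w:\liminf_{n}\|T^{n}w\|=0\}$ and $\{w:\limsup_{n}\|T^{n}w\|=\infty\}$ are both $G_{\delta}$, and the construction above is precisely what is needed to see that each is dense, hence residual, so their intersection, the set of irregular vectors, is nonempty.) Combining (ii)$\Rightarrow$(iii) with the elementary implications closes the cycle and establishes the equivalence.
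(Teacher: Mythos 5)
First, a point of comparison: the paper does not prove this statement at all --- it is quoted as \cite[Theorem 9]{NC15} and used as a black box --- so the only thing to measure your attempt against is the proof in that reference, which handles the hard implication by a Baire category argument on the cyclic subspace generated by the semi-irregular vector. Your elementary implications are all correct: the identification of a Li--Yorke pair $(p,q)$ with the semi-irregular vector $z=p-q$, the scrambled set $\{\lambda z:\lambda\in[0,1]\}$ for (ii)$\Rightarrow$(i), and the trivial (iii)$\Rightarrow$(ii).

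For (ii)$\Rightarrow$(iii) your series construction $w=\sum_k\alpha_k T^{a_k}z$ can be pushed through, but the one step your sketch leaves genuinely unresolved is the decisive one: at a designated decay time $M$ you must make $\sum_{k'\le k}\alpha_{k'}T^{M+a_{k'}}z$ small for \emph{all} already-chosen blocks simultaneously, and this cannot be arranged by asking $M+a_{k'}$ to be a decay time of $z$ for every $k'$ --- the decay times of $z$ are an arbitrary subsequence with no arithmetic structure. The correct mechanism is continuity: if $T^{n_i}z\to 0$ then $T^{n_i}(T^{a}z)=T^{a}(T^{n_i}z)\to 0$ for each fixed $a$, hence every finite linear combination of orbit vectors tends to $0$ along $(n_i)$; one then chooses $M_k=n_i$ with $i$ large enough for the $k$-th partial sum and controls the tail through the later choices of $a_{k'}$, exactly as you already do at the growth times. (You also implicitly use that $\limsup_n\|T^nz\|<\infty$ forces $\sup_n\|T^nz\|<\infty$ when dominating the earlier blocks; that is true but worth saying.) Your parenthetical Baire-category variant is essentially the proof in \cite{NC15}, but it must be run inside $Y=\overline{\mathrm{span}}\,\mathrm{Orb}(z,T)$ rather than in all of $X$: there one shows $\sup_n\|T^n|_Y\|=\infty$ (otherwise $\|T^nz\|=\|T^{n-n_i}T^{n_i}z\|\le M\|T^{n_i}z\|\to 0$, contradicting semi-irregularity), so $\{y\in Y:\sup_n\|T^ny\|=\infty\}$ is residual in $Y$ by Banach--Steinhaus, while $\{y\in Y:\liminf_n\|T^ny\|=0\}$ is a $G_\delta$ containing the dense subspace $\mathrm{span}\,\mathrm{Orb}(z,T)$. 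In the whole space the latter set need not be dense (consider $T=I\oplus S$), so the claim ``each is dense'' as you state it is false without the restriction to $Y$. With these two repairs your argument closes, and it is a more hands-on alternative to the soft category argument of the cited source.
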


\section{Li-Yorke chaotic composition operators on Orlicz-Lorentz Space}
In this section, we discuss Li-Yorke chaotic composition operators on Orlicz-Lorentz spaces.

\begin{theorem}\label{nasc}
	Let $C_{\tau}$ be a continuous linear operator on $\mathbb{L}^{\varphi, h}(\mu)$. Then the composition operator on $\mathbb{L}^{\varphi, h}(\mu)$ is Li-Yorke chaotic iff there is an increasing sequence of positive integers $\{\gamma_{k}\}_{k \in \mathbb{N}}$ and a countable family of non-empty measurable sets $\{A_{i}\}_{i\in I}$ with $0<\mu(A_{i})<\infty$ such that
	\begin{itemize}
	\item[(a)] $\displaystyle\lim_{k \to \infty}\varphi^{-1}\(\frac{1}{\int_{0}^{\mu(\tau^{-\gamma_{k}}(A_{i}))} h(t)dt}\)=\infty,~\forall i \in I.$
	\item[(b)] $\displaystyle\sup \left\{ \frac{\varphi^{-1}\(\frac{1}{\int_{0}^{\mu(A_{i})} h(t)dt}\)}{\varphi^{-1}\(\frac{1}{\int_{0}^{\mu(\tau^{-n}(A_{i}))} h(t)dt}\)}:i\in I, n\in \mathbb{N}\right\}=\infty.$
	\end{itemize}
\end{theorem}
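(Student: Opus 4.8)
The plan is to route everything through Theorem~\ref{prop}: the operator $C_\tau$ is Li-Yorke chaotic iff it admits a semi-irregular (equivalently irregular) vector. The bridge to (a) and (b) is the identity $C_\tau\chi_A=\chi_{\tau^{-1}(A)}$, so that $C_\tau^n\chi_A=\chi_{\tau^{-n}(A)}$ and hence, by the norm formula, $\|C_\tau^n\chi_A\|_{\varphi,h}=1/\varphi^{-1}\!\left(1/\int_0^{\mu(\tau^{-n}(A))}h(t)\,dt\right)$. Under this identity condition (a) says precisely that $\|C_\tau^{\gamma_k}\chi_{A_i}\|\to0$ for each $i$, while condition (b) says $\sup_{i,n}\|C_\tau^n\chi_{A_i}\|/\|\chi_{A_i}\|=\infty$. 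Two structural facts will be used repeatedly: $C_\tau$ is a positive operator, and the Orlicz-Lorentz norm is a lattice norm, so $0\le f\le g$ forces $\|f\|_{\varphi,h}\le\|g\|_{\varphi,h}$.

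For sufficiency I would assume (a),(b) and construct an irregular vector. Put $u_k=\chi_{A_{i_k}}/\|\chi_{A_{i_k}}\|$. Since $C_\tau$ is bounded, $\sup_i\|C_\tau^n u_i\|\le\|C_\tau\|^n<\infty$ for each fixed $n$, so (b) can only hold through $n\to\infty$: there are indices $i_k$ and times $p_k\to\infty$ with $\|C_\tau^{p_k}u_k\|\to\infty$. I then build $z=\sum_k c_k u_k$ by a gliding-hump induction, interleaving the hump times $p_k$ with return times $q_k$ drawn from $\{\gamma_m\}$, so that $p_1<q_1<p_2<q_2<\cdots$. At stage $k$ I choose $c_k>0$ small enough that $c_k\|C_\tau\|^{q_j}\le 2^{-k}$ for all $j<k$ (keeping $\sum_k c_k<\infty$), and then pick $p_k,i_k$ with $c_k\|C_\tau^{p_k}u_k\|\ge 2^{k}$, which is possible because the ratio in (b) is unbounded at large times. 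With the finite partial sum fixed, (a) lets me select $q_k>p_k$ with $\|C_\tau^{q_k}(\sum_{l\le k}c_lu_l)\|<2^{-k}$. Positivity together with the lattice property gives $\|C_\tau^{p_k}z\|\ge c_k\|C_\tau^{p_k}u_k\|\ge 2^{k}$, hence $\limsup_n\|C_\tau^nz\|=\infty$; the coefficient constraint controls the tail at each $q_k$, yielding $\|C_\tau^{q_k}z\|<2^{1-k}\to0$ and hence $\liminf_n\|C_\tau^nz\|=0$. Thus $z$ is irregular and $C_\tau$ is Li-Yorke chaotic by Theorem~\ref{prop}.

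For necessity, Theorem~\ref{prop} hands me an irregular vector $z$, and I would take the countable family $A_i=\{x:|z(x)|>1/i\}$ (discarding the empty ones), each of finite positive measure. Condition (a) is then immediate: from $\chi_{A_i}\le i|z|$ and positivity one gets $\|C_\tau^n\chi_{A_i}\|\le i\,\|C_\tau^nz\|$, so choosing $\gamma_k$ with $\|C_\tau^{\gamma_k}z\|\to0$ (available since $\liminf_n\|C_\tau^nz\|=0$) forces $\|C_\tau^{\gamma_k}\chi_{A_i}\|\to0$ for every $i$.

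The hard part will be condition (b), and I would prove it by contradiction. Suppose $\|C_\tau^n\chi_A\|\le C\|\chi_A\|$ for all level sets $A=\{|z|>\eta\}$ and all $n$. By Lemma~\ref{lem1} (this is exactly where the $\Delta_2$-condition enters) the associated weighted-measure integrals satisfy $\int_0^{\mu(\tau^{-n}(A))}h(t)\,dt\le k\int_0^{\mu(A)}h(t)\,dt$ uniformly in $\eta$ and $n$. Writing $\mu_{z\circ\tau^n}(\eta)=\mu(\tau^{-n}\{|z|>\eta\})$ and invoking the layer-cake representation of the Orlicz-Lorentz modular from \cite{ROL}, this uniform bound transfers to $I_{\varphi,h}\!\left((z\circ\tau^n)/\lambda\right)\le k\,I_{\varphi,h}(z/\lambda)$ for all $n$ and $\lambda$; the $\Delta_2$-condition then upgrades this uniform modular estimate to $\sup_n\|C_\tau^nz\|_{\varphi,h}<\infty$, contradicting $\limsup_n\|C_\tau^nz\|=\infty$. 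I expect the genuine difficulty to lie precisely in this transfer from the level-set (characteristic-function) estimates to the norm of the true vector $z\circ\tau^n$: it cannot be done by a soft triangle-inequality argument, because in a rearrangement-invariant space $\int_0^\infty\|\chi_{\{|z|>\eta\}}\|_{\varphi,h}\,d\eta$ need not be comparable to $\|z\|_{\varphi,h}$, and one is forced through the modular identity together with Lemma~\ref{lem1} and the $\Delta_2$-machinery.
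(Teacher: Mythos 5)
Your overall strategy matches the paper's at the top level --- both directions are routed through Theorem \ref{prop} and the norm identity $\|C_\tau^n\chi_A\|_{\varphi,h}=1/\varphi^{-1}\bigl(1/\int_0^{\mu(\tau^{-n}(A))}h(t)\,dt\bigr)$ --- but the implementations differ genuinely in both directions. For sufficiency the paper does not build an irregular vector by hand: it takes $S$ to be the closed span of the $\chi_{A_i}$, uses (a) to show that the vectors whose orbit has a null subsequence are dense in $S$, uses (b) together with the Banach--Steinhaus theorem to show that the vectors with unbounded orbit are residual in $S$, and intersects the two sets. Your gliding-hump construction of $z=\sum_k c_ku_k$ is a correct constructive substitute: the key points (that the double sup in (b) can only blow up through $n\to\infty$ since $\sup_i\|C_\tau^nu_i\|\le\|C_\tau\|^n$ for fixed $n$; that a single sequence $\{\gamma_m\}$ serves all $i$ simultaneously, so a finite partial sum can be made small at a common time; and that positivity plus the lattice property of the norm give the lower bound at hump times) are all sound, and your argument is if anything more detailed than the paper's two-line Baire-category step. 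For necessity the paper slices the irregular vector into annuli $A_i=\{3^{i-1}\le|g|<3^i\}$ rather than your superlevel sets, and proves (b) by the same contradiction you propose, but via a discrete sum over annuli instead of a continuous layer-cake.

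The one genuine soft spot is exactly the step you flag as hard. With your choice $A_i=\{|z|>1/i\}$, the negation of (b) only yields $\int_0^{\mu(\tau^{-n}(A_i))}h\le k\int_0^{\mu(A_i)}h$ at the discrete levels $\eta=1/i$, whereas your layer-cake identity $I_{\varphi,h}(\lambda(z\circ\tau^n))=\int_0^\infty\bigl(\int_0^{\mu(\tau^{-n}\{|z|>s/\lambda\})}h\bigr)\,d\varphi(s)$ requires the bound at (almost) every level $s/\lambda$; interpolating between $1/(i+1)$ and $1/i$ costs an uncontrolled factor $\int_0^{\mu(A_{i+1})}h\big/\int_0^{\mu(A_i)}h$, and levels above $1$ are not covered at all. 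So the assertion ``uniformly in $\eta$'' does not follow from your hypotheses as stated. This is fixable: either index the family by a countable dense set of levels and pass to a.e.\ $\eta$ by monotonicity of distribution functions, or adopt the paper's annular decomposition, in which each $A_i$ carries the two-sided bound $3^{i-1}\le|g|<3^i$, so the modular of $g\circ\tau^n$ is controlled annulus by annulus at the cost of the fixed factor $3r$. (Both your argument and the paper's invoke Lemma \ref{lem1}, hence the $\Delta_2$-condition, at this point, even though the theorem as stated does not assume it; that defect is shared and not specific to your proof. Likewise, both proofs gloss over verifying $\mu(A_i)<\infty$ for the chosen family.)
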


\begin{proof}
First, we suppose that $C_{\tau}$ is a continuous Li-Yorke chaotic composition operator on Orlicz-Lorentz space $\mathbb{L}^{\varphi, h}(\mu)$. Then there is an irregular vector $g\in \mathbb{L}^{\varphi, h}(\mu)$ for $C_{\tau}$. Let $A_{i}=\{ x \in X: 3^{i-1}\leq |g(x)|<3^{i}\},~i\in \mathbb{Z}.$ Then clearly, $A_{i}\in \mathbb{A},~\forall i\in \mathbb{Z}$ and suppose $I=\{i\in \mathbb{Z}:\mu(A_{i})>0\}.$ As $g\in \mathbb{L}^{\varphi,h}(\mu)$, so there is some $\lambda>0$ such that
$$\int_{K}\varphi(\lambda f^{*}(t))h(t)dt<\infty.$$
Thus for all $i \in I$, we get
\begin{eqnarray*}
\varphi(3^{i-1}\lambda)\mu(A_{i}) &=& \int_{A_{i}}\varphi(\lambda|g(t)|)d\mu(t) \\
                            &=& \int_{K}\varphi(\lambda g^{*}(t)) h(t)d(t) \\
                             &<& \infty.
\end{eqnarray*}
This implies that $0<\mu(A_{i})<\infty,\forall~i\in I$. Since $g$ is an irregular vector, so there is an increasing sequence $\{\gamma_{k}\}_{k\in \mathbb{N}}$ of positive integers such that 
\begin{equation}\label{eq1}
\displaystyle\lim_{k \to \infty}||C_{\tau}^{\gamma_{k}}g||_{\varphi,h}=0.
\end{equation}
Now, by using the fact that $h$ is decreasing, we have 
\begin{eqnarray*}
\int_{K}\varphi\(\frac{(3^{i-1}\chi_{\tau^{-\gamma_{k}(A_{i})}})^{*}(t)}{M||C_{\tau}^{\gamma_{k}}g||_{\varphi,h}}\)h(t)dt 
        &=& \int_{\tau^{-\gamma_{k}}(A_{i})}\varphi\(\frac{(3^{i-1}\chi_{A_{i}}\circ \tau^{\gamma_{k}})(t)}{M||C_{\tau}^{\gamma_{k}}g||_{\varphi,h}}\)h(t)d\mu(t) \\
        &=& \int_{\tau^{-\gamma_{k}}(A_{i})}\varphi\(\frac{|g\circ \tau^{\gamma_{k}}(t)|}{M||C_{\tau}^{\gamma_{k}}g||_{\varphi,h}}\)h(t)d\mu(t) \\
        &=& \int_{0}^{\mu(\tau^{-\gamma_{k}}(A_{i}))}\varphi\(\frac{(g\circ \tau^{\gamma_{k}})^{*}(t/M)}{M||C_{\tau}^{\gamma_{k}}g||_{\varphi,h}}\)h(t)dt \\
				&\leq & \int_{K} \varphi\(\frac{(g \circ\tau^{\gamma_{k}})^{*}(u)}{||C_{\tau}^{\gamma_{k}}g||_{\varphi,h}}\)h(u)du \\
				&\leq& 1.
\end{eqnarray*}		
This means that	$||\chi_{\tau^{-\gamma_{k}}(A_{i})}||_{\varphi,h}\leq M||C_{\tau^{\gamma_{k}}}g||_{\varphi,h}$. By using the Equation(\ref{eq1}), we get
\begin{eqnarray*}
\displaystyle\lim_{k \to \infty}||\chi_{\tau^{-\gamma_{k}}(A_{i})}||_{\varphi,h} &=& 0 \\
\displaystyle\lim_{k \to \infty}\varphi^{-1}\(\frac{1}{\int_{0}^{\mu(\tau^{-\gamma_{k}}(A_{i}))}h(t)dt}\) &=& \infty,~i\in \mathbb{Z}.
\end{eqnarray*}
Thus (a) holds. Now, for the second suppose on the contrary that (b) will not be holds. Then for some $r>0$, we have
$$\varphi^{-1}\(\frac{1}{\int_{0}^{\mu(A_{i})} h(t)dt}\) \leq r \varphi^{-1}\(\frac{1}{\int_{0}^{\mu(\tau^{-n}(A_{i}))} h(t)dt}\),~\forall~i\in I,~n\in \mathbb{N}.$$
By using the Lemma \ref{lem1}, we see that 
    $$\int_{0}^{\mu(\tau^{-n}(A_{i}))} h(t)dt \leq r \int_{0}^{\mu(A_{i})}h(t)dt,~\mbox{ for some}~ r>0.$$
Now, for every $n \in \mathbb{N}$,
\begin{eqnarray*}
\int_{K}\varphi\(\frac{(g \circ \tau^{n})^{*}(t)}{3r||g||_{\varphi,h}}\)h(t)dt &=&\sum_{i\in I}\int_{0}^{\mu(\tau^{-n}(A_{i}))}\varphi\(\frac{(g \circ \tau^{n})^{*}(t)}{3r||g||_{\varphi,h}}\)h(t)dt \\
            &=& \sum_{i\in I}\int_{\tau^{-n}(A_{i})}\varphi\(\frac{(g \circ \tau^{n})(t)}{3r||g||_{\varphi,h}}\)h(t)d\mu(t) \\
            &<& \sum_{i\in I}\varphi\(\frac{3^{i}}{3r||g||_{\varphi,h}}\)\int_{\tau^{-n}(A_{i})}h(t)d\mu(t) \\
						&=& \sum_{i\in I}\varphi\(\frac{3^{i-1}}{r||g||_{\varphi,h}}\)\int_{0}^{\mu(\tau^{-n}(A_{i}))}h(t)dt \\
						&<& \sum_{i\in I}\varphi\(\frac{3^{i-1}}{||g||_{\varphi,h}}\)\int_{0}^{\mu(A_{i})}h(t)dt \\
						&=& \sum_{i\in I}\int_{0}^{\mu(A_{i})} \varphi\(\frac{|g(t)|}{||g||_{\varphi,h}}\)h(t)dt \\
            &=& \sum_{i\in I}\int_{A_{i}} \varphi\(\frac{|g(t)|}{||g||_{\varphi,h}}\)h(t)d\mu(t) \\
            &=& \sum_{i\in I}\int_{0}^{\mu(A_{i})} \varphi\(\frac{g^{*}(t)}{||g||_{\varphi,h}}\)h(t)d(t) \\
						&=& \int_{K} \varphi\(\frac{g^{*}(t)}{||g||_{\varphi,h}}\)h(t)d(t) \\
            &\leq& 1.
\end{eqnarray*}
i.e., $||C_{\tau^{n}}g||_{\varphi,h}\leq 3r||f||_{\varphi,h}$, which is a contradiction to the fact that $g$ is irregular and so the condition (b) holds.\\
Conversely, suppose that the condition (a) and (b) holds for measurable non-empty countable family $\{A_{i}\}_{i\in \mathbb{N}}$ with $0<\mu(A_{i})<\infty$. Let $S$ be a linear span of $\overline{\{\chi_{A_{i}}:i\in I\}}$ in Orlicz-Lorentz space. Then the orbit of $C_{\tau}$ is 
 $$\mathcal{O}_{C_{\tau}g}=\{g,C_{\tau}g,...\}.$$
By using the conditions (a), we see that the set $S_{1}=\{g\in S: \mathcal{O}_{C_{\tau}g}~ \mbox{has a subsequence converging to 0}\}$ is dense in S.\\
 Let $g_{i}=\varphi^{-1}\(\frac{1}{\int_{0}^{\mu(A_{i})}h(t)dt}\).\chi_{A_{i}} \in S$. Then clearly, $||g_{i}||_{\varphi,h}=1$ and $||C_{\tau}^{n}g_{i}||_{\varphi,h}=\frac{\varphi^{-1}\(\frac{1}{\int_{0}^{\mu(A_{i})} h(t)dt}\)}{\varphi^{-1}\(\frac{1}{\int_{0}^{\mu(\tau^{-n}(A_{i}))} h(t)dt}\)}$ and so from (b), we get $\displaystyle\sup_{n\in \mathbb{N}}||C_{\tau}^{n}/_{S}||_{\varphi, h}=\infty$. Now, by the Banach-Steinhaus theorem, the set $S_{2}=\{g \in S:\mathcal{O}_{C_{\tau}g} ~\mbox{is unbounded}\}$ is residual in S and hence $C_{\tau}$ is Li-Yorke chaotic as $g \in S_{1}\cap S_{2}$.
\end{proof}

\begin{theorem}
Suppose that the non-singular measurable transformation $\tau$ is injective. The composition operator $C_{\tau}$ is Li-Yorke chaotic if there is a measurable set $A$ with $0<\mu(A)<\infty$ such that
\begin{itemize} 
\item[(i)] $\displaystyle\lim_{n \to \infty} \sup\varphi^{-1}\(\frac{1}{\int_{0}^{\mu(\tau^{-n}(A_{i}))} h(t)dt}\)=\infty,$
\item[(ii)] $\sup\left\{\frac{\varphi^{-1}\(\frac{1}{\int_{0}^{\mu(\tau^{q}(A))}h(t)dt}\)}{\varphi^{-1}\(\frac{1}{\int_{0}^{\mu(\tau^{p}(A))}h(t)dt}\)}:p,q\in I,p<q\right\}=\infty,$
\end{itemize}
where $I=\{q\in \mathbb{Z}:0<\mu(\tau^{q}(A))<\infty\}$.
\end{theorem}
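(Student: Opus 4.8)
The plan is to deduce Li-Yorke chaos from Theorem~\ref{nasc} by manufacturing, out of the single set $A$ and its $\tau$-orbit, a countable family $\{A_j\}$ for which conditions (a) and (b) of that theorem hold; by Theorem~\ref{prop} it is in fact enough to exhibit one irregular vector, and the family will be engineered so that its closed span carries one. The first step is to record the two structural facts that make everything run. Since for a characteristic function the norm formula gives $\|\chi_B\|_{\varphi,h}=1/\varphi^{-1}(1/\int_0^{\mu(B)}h(t)\,dt)$, both hypotheses are statements about these norms: (i) says exactly that $\liminf_n\|\chi_{\tau^{-n}(A)}\|_{\varphi,h}=0$, i.e. $\liminf_n\|C_\tau^{\,n}\chi_A\|_{\varphi,h}=0$, while (ii) says that $\sup\{\|\chi_{\tau^{p}(A)}\|_{\varphi,h}/\|\chi_{\tau^{q}(A)}\|_{\varphi,h}:p<q,\ p,q\in I\}=\infty$. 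The second fact is the shift identity coming from injectivity: if $\tau$ is injective and $p<q$ then $\tau^{-(q-p)}(\tau^{q}(A))=\tau^{p}(A)$, so that $C_\tau^{\,q-p}\chi_{\tau^{q}(A)}=\chi_{\tau^{p}(A)}$. In other words, on the functions $\chi_{\tau^{q}(A)}$ the operator $C_\tau$ behaves as a weighted backward shift that lowers the exponent $q$ by one, and the quantities in (i) and (ii) are precisely its orbital norms.

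Next I would choose the family. Using (ii), pick pairs $p_j<q_j$ in $I$ with $\|\chi_{\tau^{p_j}(A)}\|_{\varphi,h}/\|\chi_{\tau^{q_j}(A)}\|_{\varphi,h}\to\infty$, and set $A_j:=\tau^{q_j}(A)$, $n_j:=q_j-p_j$. By the shift identity $\|C_\tau^{\,n_j}\chi_{A_j}\|_{\varphi,h}=\|\chi_{\tau^{p_j}(A)}\|_{\varphi,h}$, hence $\|C_\tau^{\,n_j}\chi_{A_j}\|_{\varphi,h}/\|\chi_{A_j}\|_{\varphi,h}\to\infty$; translating back through the norm formula (and Lemma~\ref{lem1} if one prefers the integral form), this is exactly condition (b) of Theorem~\ref{nasc} for the family $\{A_j\}$. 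For condition (a) I would use (i): for fixed $j$ one has $\|C_\tau^{\,n}\chi_{A_j}\|_{\varphi,h}=\|\chi_{\tau^{q_j-n}(A)}\|_{\varphi,h}$, and taking $n=q_j+m$ with $m$ running through a subsequence on which $\|\chi_{\tau^{-m}(A)}\|_{\varphi,h}\to0$ (such a subsequence exists by (i)) makes this tend to $0$; this gives, for each $j$, a subsequence of positive integers along which the orbit of $\chi_{A_j}$ converges to $0$, which is condition (a).

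Finally I would pass from (a) and (b) to an irregular vector exactly as in the converse part of Theorem~\ref{nasc}. Let $S$ be the closed linear span of $\{\chi_{A_j}\}$; since $C_\tau$ restricted to $S$ satisfies $\sup_n\|C_\tau^{\,n}|_S\|=\infty$ by (b), the Banach--Steinhaus theorem makes the set $S_2$ of vectors with unbounded orbit residual in $S$. Simultaneously, (a) makes the set $S_1$ of vectors whose orbit has a subsequence tending to $0$ dense in $S$, so any $g\in S_1\cap S_2$ is irregular and Theorem~\ref{prop} finishes the proof. The step I expect to be the genuine obstacle is the density of $S_1$: condition (i) only supplies, for each $A_j$ separately, its own subsequence $\{q_j+m_k\}$ on which the orbit decays, whereas making a finite linear combination $\sum_j c_j\chi_{A_j}$ lie in $S_1$ requires one subsequence $\{\gamma_k\}$ that works for all the $A_j$ at once. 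I would handle this by passing to a subfamily $\{q_j\}$ chosen (by diagonalization against the decay subsequence from (i)) so that the shifted backward-iterate norms $\|\chi_{\tau^{q_j-\gamma_k}(A)}\|_{\varphi,h}$ decay along a common $\{\gamma_k\}$, or by exploiting the backward-shift structure on $S$ directly; this alignment of the per-set subsequences furnished by (i) is the one point demanding real care, the rest being the bookkeeping already present in the proof of Theorem~\ref{nasc}.
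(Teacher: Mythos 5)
Your reduction is the same one the paper uses: set $A_i=\tau^{i}(A)$ for $i\in I$, exploit injectivity to write $\tau^{p}(A)=\tau^{p-q}(\tau^{q}(A))$ so that the supremum in (ii) becomes exactly the supremum in condition (b) of Theorem~\ref{nasc}, and feed (i) into condition (a); your final Banach--Steinhaus/density step is just the converse half of Theorem~\ref{nasc} re-run by hand, so nothing there is new. The one point of divergence is the point you yourself single out, and it is a real one: condition (a) of Theorem~\ref{nasc} demands a \emph{single} increasing sequence $\{\gamma_k\}$ along which $\mu(\tau^{-\gamma_k}(A_i))\to 0$ for \emph{every} member of the family, whereas, since $\tau^{-n}(\tau^{q}(A))=\tau^{q-n}(A)$, hypothesis (i) only supplies for each $q$ separately the shifted sequence $\gamma_k=q+m_k$. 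These shifted sequences need not have a common refinement: if, say, $\mu(\tau^{-m}(A))$ tends to $0$ only along even $m$ and stays bounded below along odd $m$, then no single $\{\gamma_k\}$ works simultaneously for two exponents $q,q'$ of different parity. The paper dismisses this in one clause (``by using (i), it follows that condition (a) of Theorem~\ref{nasc} will hold'') with no argument, so the obstacle you flag is not an artifact of your write-up; it is precisely the step the paper leaves unjustified.

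Your proposed remedy --- diagonalize, or thin the exponents $q_j$ witnessing (ii) so that they are compatible with a common decay sequence --- is the right instinct, but as stated it is only a sketch: one must actually show that pairs $p_j<q_j$ with unbounded ratio can be chosen so that the sets $\{m: \mu(\tau^{q_j-m}(A))<\varepsilon\}$ have an infinite common intersection, or else replace Theorem~\ref{nasc} by a form of the Li--Yorke criterion that only requires, for each \emph{finite} subfamily (indeed for each single vector of a dense set), its own decay subsequence. Until that selection argument is carried out, your proof and the paper's share the same gap; in every other respect your proposal coincides with the paper's argument.
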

\begin{proof}
Since $\tau$ is injective. Taking $A_{i}=\tau^{i}(A),~i \in \mathbb{Z}$. Then by using the (i), it follows that the condition (a) of Theorem \ref{nasc} will hold. Now for $p,q \in I$, with $p<q,$ we have
$$A_{p}=\tau^{p}(A)=\tau^{p-q}(\tau^{q}(A))=\tau^{p-q}(A_{q}).$$
Hence,
\begin{eqnarray*}
\sup\left\{\frac{\varphi^{-1}\(\frac{1}{\int_{0}^{\mu(\tau^{q}(A))}h(t)dt}\)}{\varphi^{-1}\(\frac{1}{\int_{0}^{\mu(\tau^{p}(A))}h(t)dt}\)}:p,q\in I,p<q\right\} 
              &=& \sup\left\{\frac{\varphi^{-1}\(\frac{1}{\int_{0}^{\mu(A_{q})}h(t)dt}\)}{\varphi^{-1}\(\frac{1}{\int_{0}^{\mu(\tau^{p-q}(A_{q}))}h(t)dt}\)}:p,q\in I,p<q\right\}\\
              &=& \sup\left\{\frac{\varphi^{-1}\(\frac{1}{\int_{0}^{\mu(A_{i})}h(t)dt}\)}{\varphi^{-1}\(\frac{1}{\int_{0}^{\mu(\tau^{p}(A_{i}))}h(t)dt}\)}:p\in I, i\in I\right\}.
\end{eqnarray*}							
The condition (ii) implies that (b) of Theorem \ref{nasc} holds and thus $C_{\tau}$ is Li-Yorke chaotic.
\end{proof}

We now provided some necessary and sufficient conditions for the Li-Yorke chaotic composition operator $C_{\tau}$ on $L^{\varphi,h}$. 
\begin{theorem}\label{thrm2.3}
Let $\tau$ be injective and $\varphi$ satisfies the $\Delta_{2}$ condition. Then the following are equivalent:
\begin{itemize}
\item[(a)] The composition operator $C_{\tau}$ on $\mathbb{L}^{\varphi,h}(\mu)$ is Li-Yorke chaotic.
\item[(b)] There exists measurable function $0\neq g\in \mathbb{L}^{\varphi,h}(\mu)$ such that 
        $$\displaystyle\lim_{n \to \infty}\inf||C_{\tau}^{n}g||_{\varphi,h}=0.$$
\item[(c)] There exists $A\in \mathbb{A}$ with $0<\mu(A)<\infty$ such that
         $$\displaystyle\lim_{n \to \infty}\sup\varphi^{-1}\(\frac{1}{\int_{0}^{\mu(\tau^{-n}(A))}h(t)dt}\)=\infty.$$
\item[(d)] There exists a measurable set $A\in \mathbb{A}$ with $0<\mu(A)<\infty$ such that
        $$\displaystyle\lim_{n \to \infty}\sup\varphi^{-1}\(\frac{1}{\int_{0}^{\mu(\tau^{n}(A))}h(t)dt}\)=\infty.$$
\item[(e)] There exists a measurable set $A\in \mathbb{A}$ with $0<\mu(A)<\infty$ such that				
$$\displaystyle\lim_{n \to \infty}\sup\varphi^{-1}\(\frac{1}{\int_{0}^{\mu(\tau^{-n}(A))}h(t)dt}\)=\infty~\mbox{and}~\displaystyle\lim_{n \to \infty}\sup\varphi^{-1}\(\frac{1}{\int_{0}^{\mu(\tau^{n}(A))}h(t)dt}\)=\infty.$$
\item[(f)] There exists a measurable set $A\in \mathbb{A}$ with $0<\mu(A)<\infty$ such that
$$\displaystyle\lim_{n \to \infty}\inf\varphi^{-1}(\frac{1}{\int_{0}^{\mu(\tau^{-n}(A))}h(t)dt})>0~\mbox{and}~\displaystyle\lim_{n \to \infty}\sup\varphi^{-1}\(\frac{1}{\int_{0}^{\mu(\tau^{-n}(A))}h(t)dt}\)=\infty.$$
\item[(g)] There is a measurable set $A$ such that the characteristic function is a semi-irregular vector for $C_{\tau}$ on $\mathbb{L}^{\varphi,h}$.
\end{itemize}
\end{theorem}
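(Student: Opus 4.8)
The backbone of the argument is the single identity $C_\tau^n\chi_A=\chi_{\tau^{-n}(A)}$, which together with the norm formula for characteristic functions gives
\[
\|C_\tau^n\chi_A\|_{\varphi,h}=\frac{1}{\varphi^{-1}\(\frac{1}{\int_0^{\mu(\tau^{-n}(A))}h(t)\,dt}\)} .
\]
Since $\varphi^{-1}$ is increasing and $s\mapsto\int_0^s h$ is increasing with $\int_0^0 h=0$, the quantity $\varphi^{-1}(1/\int_0^{\mu(\tau^{-n}(A))}h)$ blows up exactly when $\|C_\tau^n\chi_A\|_{\varphi,h}\to 0$. Hence (c) is nothing but ``$\liminf_n\|C_\tau^n\chi_A\|_{\varphi,h}=0$'', the semi-irregularity in (g) is ``$\liminf_n\|C_\tau^n\chi_A\|_{\varphi,h}=0$ and $\limsup_n\|C_\tau^n\chi_A\|_{\varphi,h}>0$'', and (f) rephrases as ``the orbit of $\chi_A$ is bounded and $\liminf_n\|C_\tau^n\chi_A\|_{\varphi,h}=0$''. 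My plan is to translate every clause into such an orbit-norm statement and then feed it into Theorem \ref{prop}.

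I would first dispose of the implications that need no construction. (g)$\Rightarrow$(a) is immediate from Theorem \ref{prop}, a semi-irregular characteristic function being in particular a semi-irregular vector; and (a)$\Rightarrow$(b) is immediate as well, since Theorem \ref{prop} furnishes an irregular vector $g\neq 0$, which satisfies $\liminf_n\|C_\tau^n g\|_{\varphi,h}=0$. For (b)$\Rightarrow$(c) I would recycle the level-set decomposition from the proof of Theorem \ref{nasc}: set $A_i=\{x:3^{i-1}\le|g(x)|<3^i\}$, pick $i$ with $\mu(A_i)>0$, and use the $\Delta_2$-estimate $\|\chi_{\tau^{-n}(A_i)}\|_{\varphi,h}\le M\|C_\tau^n g\|_{\varphi,h}$ along a subsequence on which $\|C_\tau^n g\|_{\varphi,h}\to 0$; this yields (c) with $A=A_i$. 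Conversely (c)$\Rightarrow$(b) is trivial with $g=\chi_A$, as $\mu(A)>0$.

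Injectivity of $\tau$ is what ties the forward and backward conditions (c)--(f) together. Exactly as in the theorem preceding this one, I would reindex by $A_i=\tau^i(A)$ and use $\tau^{-n}(\tau^m(A))=\tau^{m-n}(A)$ to trade forward images $\tau^n(A)$ for backward images; Lemma \ref{lem1} (where $\Delta_2$ is used) then lets me pass between the norm-ratio form and the measure form $\int_0^{\mu(\tau^{-n}(A))}h\le k\int_0^{\mu(A)}h$, so that the supremum condition (b) of Theorem \ref{nasc} and the various $\limsup=\infty$ clauses of (c), (d), (e) become interchangeable for an appropriate choice of set.

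The step I expect to be the crux is producing the ``growth'' half of Li-Yorke chaos — an unbounded, or at least $\limsup>0$, orbit — out of the purely vanishing information in (c). Conditions like (c) supply the hypothesis (a) of Theorem \ref{nasc} (a common subsequence along which the relevant quantity blows up), but its hypothesis (b), the unboundedness of the normalized orbits $\|C_\tau^n\chi_{A_i}\|_{\varphi,h}/\|\chi_{A_i}\|_{\varphi,h}$, points in the opposite direction and must be extracted separately. This is precisely where injectivity is indispensable: replacing a set by a forward image $\tau^{n_k}(A)$ turns the identity $\tau^{-n_k}(\tau^{n_k}(A))=A$ into a ratio $\|\chi_A\|_{\varphi,h}/\|\chi_{\tau^{n_k}(A)}\|_{\varphi,h}$, converting backward-image data into the forward-image data needed for hypothesis (b) and for (d)--(f). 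Reconciling the two directions into a single set — so that (e) holds with one $A$ and $\chi_A$ is genuinely semi-irregular for (g), after which Theorem \ref{prop} returns (a) — is the delicate part, and I would watch closely the finite-versus-infinite thresholds in the $\liminf/\limsup$ clauses and the left-continuity of $\varphi$ at $b_\varphi$, since that bookkeeping is where an otherwise routine plan most easily slips.
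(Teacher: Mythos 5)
Your dictionary between $\|C_\tau^n\chi_A\|_{\varphi,h}$ and $\varphi^{-1}\bigl(1/\int_0^{\mu(\tau^{-n}(A))}h\bigr)$ is the same one the paper uses, and the implications you actually carry out are fine: (g)$\Rightarrow$(a) and (a)$\Rightarrow$(b) via Theorem \ref{prop}, (c)$\Rightarrow$(b) with $g=\chi_A$, and (b)$\Rightarrow$(c) via a level-set of $g$ (the paper uses the single set $A=\{|g|>\epsilon\}$ rather than your triadic decomposition, but both give the estimate $\epsilon\|\chi_{\tau^{-n}(A)}\|_{\varphi,h}\le\|C_\tau^n g\|_{\varphi,h}$). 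The reindexing $A_i=\tau^i(A)$ under injectivity, combined with Lemma \ref{lem1}, is likewise how the paper trades forward for backward images in (c)$\Leftrightarrow$(d)$\Leftrightarrow$(e).

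The genuine gap is exactly the step you yourself flag as ``the delicate part'' and then do not execute: passing from the purely vanishing conditions (c)/(d)/(e) to a single set $A$ for which $\chi_A$ is semi-irregular, i.e.\ for which one additionally has $\limsup_n\|C_\tau^n\chi_A\|_{\varphi,h}>0$. Your remark that $\tau^{-n_k}(\tau^{n_k}(A))=A$ converts forward-image data into backward-image data gives, for each $k$, one time at which the orbit of $\chi_{\tau^{n_k}(A)}$ is large, but it does not produce one fixed set whose orbit simultaneously has a subsequence tending to $0$ and does not tend to $0$ entirely; a priori the candidate sets change with $k$. This is the heart of the theorem (it is what makes condition (b), a pure $\liminf=0$ statement, sufficient for Li--Yorke chaos), and the paper does not construct it by hand either: it imports it from \cite[Corollary 1.3]{NC19}, a measure-dynamical result for injective non-singular transformations, transferred to the weighted integrals $\int_0^{\mu(\tau^{\pm n}(A))}h$ via Lemma \ref{lem1}. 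Without reproducing that argument or invoking that result, your chain (e)$\Rightarrow$(f)$\Rightarrow$(g)$\Rightarrow$(a) is not closed, so the sufficiency direction remains unproven. (A further point worth checking when you do close it: as literally stated, clause (f) reads as ``the orbit of $\chi_A$ is bounded and $\liminf_n\|C_\tau^n\chi_A\|_{\varphi,h}=0$'', which is not the same as semi-irregularity; semi-irregularity corresponds to $\liminf_n\varphi^{-1}(\cdot)<\infty$ together with $\limsup_n\varphi^{-1}(\cdot)=\infty$, so the bookkeeping you promise to ``watch closely'' is genuinely load-bearing here.)
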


\begin{proof} 
First, suppose that $C_{\tau}$ is Li-Yorke chaotic. Then there exists $g \in \mathbb{L}^{\varphi,h}(\mu)$ is semi-irregular function such that 
$$\displaystyle\lim_{n \to \infty}\inf ||C_{\tau}^{n}g||_{\varphi,h}=0~\mbox{and}~\displaystyle\lim_{n \to \infty}\inf ||C_{\tau}^{n}g||_{\varphi,h}>0.$$
From the earlier inequality, it follows that $g\neq 0$ and so the condition (b) holds.
Now, we prove the relation $(b)\Leftrightarrow (c)$. First assume that the condition (b) holds. That is there is a some function $0 \neq g \in \mathbb{L}^{\varphi,h}$ such that 
$\displaystyle\lim_{n \to \infty}\inf ||C_{\tau}^{n}g||_{\varphi,h}=0$. Since $g \neq 0$, there is positive constant $\epsilon$ such that $\mu(A=\{x\in X:|g(x)|>\epsilon\})>0$. Further,
\begin{eqnarray*}
\int_{K}\varphi\(\frac{(\epsilon \chi_{\tau^{-n}(A)})^{*}}{||C_{\tau}^{n}g||_{\varphi,h}}\)h(t)dt &=& \int_{0}^{\mu(\tau^{-n}(A))}\varphi\(\frac{(\epsilon \chi_{\tau^{-n}(A)})^{*}}{||C_{\tau}^{n}g||_{\varphi,h}}\)h(t)dt \\
          &=& \int_{\tau^{-n}(A)}\varphi\(\frac{\epsilon \chi_{\tau^{-n}(A)}}{||C_{\tau}^{n}g||_{\varphi,h}}\)h(t)d\mu(t) \\
          &\leq& \int_{\tau^{-n}(A)}\varphi\(\frac{|g\circ \tau^{n}|}{||C_{\tau}^{n}g||_{\varphi,h}}\)h(t)d\mu(t) \\
					&=& \int_{0}^{\mu(\tau^{-n}(A))}\varphi\(\frac{(g\circ \tau^{n})^{*}(t)}{||C_{\tau}^{n}g||_{\varphi,h}}\)h(t)dt \\
          &\leq& 1
\end{eqnarray*}
i.e., \begin{eqnarray*}
\epsilon ||\chi_{\tau^{-n}(A)}||_{\varphi,h} &\leq& ||C_{\tau}^{n}g||_{\varphi,h} \\
\epsilon \frac{1}{\varphi^{-1}\(\frac{1}{\int_{0}^{\mu(\tau^{-n}(A))} h(t)dt}\)} &\leq& ||C_{\tau}^{n}g||_{\varphi,h} 
\end{eqnarray*}
and by using the given hypothesis, we get
$$\epsilon \displaystyle\lim_{n \to \infty}\inf \frac{1}{\varphi^{-1}\(\frac{1}{\int_{0}^{\mu(\tau^{-n}(A))} h(t)dt}\)} \leq \displaystyle\lim_{n \to \infty}\inf ||C_{\tau}^{n}g||_{\varphi,h} = 0.$$
This implies that $$ \displaystyle\lim_{n \to \infty}\sup \varphi^{-1}\(\frac{1}{\int_{0}^{\mu(\tau^{-n}(A))} h(t)dt}\)=\infty.$$
In order to prove the reverse inclusion, we take $A\in \mathbb{A}$ with $0<\mu(A)<\infty$ such that $\displaystyle\lim_{n \to \infty}\sup\varphi^{-1}\( \frac{1}{\int_{0}^{\mu(\tau^{-n}(A))}h(t)dt}\)=\infty.$ Then by taking $g=\chi_{A},$ we see that 
\begin{eqnarray*}
\displaystyle\lim_{n \to \infty} \inf ||C_{\tau}^{n} \circ \chi_{A}||_{\varphi,h} &=& \displaystyle\lim_{n \to \infty} \inf || \chi_{\tau^{-n}(A)}||_{\varphi,h}\\ 
                                            &=& \displaystyle\lim_{n \to \infty} \inf \frac{1}{\varphi^{-1}\(\frac{1}{\int_{0}^{\mu(\tau^{-n}(A))}h(t)dt}\)}\\
                                            &=& \frac{1}{\displaystyle\lim_{n \to \infty} \sup \varphi^{-1}\(\frac{1}{\int_{0}^{\mu(\tau^{-n}(A))}h(t)dt}\)} \\
																						&=& 0.
\end{eqnarray*}
This implies $(c)\implies (b)$.\\
$(f)\Leftrightarrow (g)$:- By putting $g =\chi_{A}$ for all $A \in \mathbb{A}$ with $0<\mu(A)<\infty$. Since
\begin{eqnarray*}
||C_{\tau}^{n}g||_{\varphi,h} &=& ||C_{\tau}^{n} \chi_{A}||_{\varphi,h}\\
                              &=& ||\chi_{\tau^{-n}(A)}||_{\varphi,h} \\
															&=& \frac{1}{\varphi^{-1}\(\int_{0}^{\mu(\tau^{-n}(A))}h(t)dt\)},
\end{eqnarray*}
which implies that $(f)$ and $(g)$ are equivalent property.\\
 The condition (e) $\implies$ (c), (e) $\implies$ (d) are directly and (g) $\implies$ (a) follows from the Theorem \ref{prop}. Now, in order to prove the condition (e) $\implies $ (f), we assume that the $\tau$ to be injective and the Orlicz function $\varphi$ satisfy the $\Delta_{2}$ conditions for large $s>0$. Suppose there is $A\in \mathbb{A}$ with $0<\mu(A)<\infty$ such that 
$$ \displaystyle\lim_{n \to \infty}\sup \varphi^{-1}\(\frac{1}{\int_{0}^{\mu(\tau^{-n}(A))} h(t)dt}\)=\infty $$
and $$\displaystyle\lim_{n \to \infty}\sup \varphi^{-1}\(\frac{1}{\int_{0}^{\mu(\tau^{n}(A))} h(t)dt}\)=\infty.$$
As $\varphi$ is continuous, increasing and satisfy the $\Delta_{2}$ condition, we get
$$ \displaystyle\lim_{n \to \infty}\sup \(\frac{1}{\int_{0}^{\mu(\tau^{-n}(A))} h(t)dt}\)=\infty ~\mbox{and}~
\displaystyle\lim_{n \to \infty}\sup \(\frac{1}{\int_{0}^{\mu(\tau^{n}(A))} h(t)dt}\)=\infty.$$
 and so $\displaystyle\lim_{n \to \infty}\inf \int_{0}^{\mu(\tau^{-n}(A))}h(t)dt =0~\mbox{and}~\displaystyle\lim_{n \to \infty}\inf \int_{0}^{\mu(\tau^{n}(A))}h(t)dt =0.$\\
Thus by using the Lemma \ref{lem1} and \cite[corollary 1.3]{NC19}, we have 
 $$\displaystyle\lim_{n \to \infty}\inf \int_{0}^{\mu(\tau^{-n}(A))}h(t)dt =0~\mbox{and}~\displaystyle\lim_{n \to \infty}\sup \int_{0}^{\mu(\tau^{-n}(A))}h(t)dt >0,$$
and so it follows that 
$$\displaystyle\lim_{n \to \infty}\inf \varphi^{-1}\(\frac{1}{\int_{0}^{\mu(\tau^{-n}(A))}h(t)dt}\) >0~\mbox{and}~\displaystyle\lim_{n \to \infty}\sup \varphi^{-1}\(\frac{1}{\int_{0}^{\mu(\tau^{-n}(A))}h(t)dt}\)=\infty.$$
(c)$\implies$(d):-
Suppose (c) will be holds. Then $\displaystyle\lim_{n\to \infty}\sup\varphi^{-1}\(\frac{1}{\int_{0}^{\mu(\tau^{-n}(A))}h(t)dt}\)=\infty.$\\
As $\varphi$ satisfies the $\Delta_{2}$ condition and by using the Lemma \ref{lem1}, we have
$$\displaystyle\lim_{n \to \infty} \sup \frac{1}{\int_{0}^{\mu(\tau^{-n}(A))}h(t)dt}=\infty$$
and consequently,
$$\displaystyle\lim_{n \to \infty} \inf \int_{0}^{\mu(\tau^{-n}(A))}h(t)dt=0.$$
Taking here $\mu(X)<\infty$ and using the fact that $\varphi$ is continuous and satisfies the $\Delta_{2}$ condition, we have
$$\displaystyle\lim_{n \to \infty} \inf \int_{0}^{\mu(\tau^{n}(A))}h(t)dt=0.$$
By Lemma \ref{lem1}, we get 
$$\displaystyle\lim_{n \to \infty}\sup \varphi^{-1}\(\frac{1}{\int_{0}^{\mu(\tau^{n}(A))}h(t)dt}\)=\infty.$$
(d) $\implies$ (e) follows on the similar line in (c) $\implies$ (d). Thus the result holds. 
\end{proof}

In the following, we consider the space $L^{\varphi}(X,\mathbb{A},\mu)\cap L^{\infty}(X,\mathbb{A},\mu)$ with the norm defined as
$$||g||_{\varphi \cap \infty} = max \{||g||_{\varphi},||g||_{\infty} \}.$$

\begin{theorem}
Assume that $(X,\mathbb{A},\mu)$ be non-atomic infinite measure space and $\tau$ be non-singular measurable transformation. Let $\varphi$ be an Orlicz function which satisfies the $\Delta_{2}$-condition for all $s>0$. Then $C_{\tau}$ on $L^{\varphi}(X,\mathbb{A},\mu)\cap L^{\infty}(X,\mathbb{A},\mu)$ is not Li-Yorke chaotic.
\end{theorem}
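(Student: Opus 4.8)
The plan is to argue by contradiction through the characterisation of Theorem \ref{prop}: since on the Banach space $Y:=L^{\varphi}(\mu)\cap L^{\infty}(\mu)$ the operator $C_{\tau}$ is Li-Yorke chaotic \emph{iff} it admits a semi-irregular vector, it suffices to show that no $0\neq g\in Y$ can satisfy both $\liminf_{n}\|C_{\tau}^{n}g\|_{\varphi\cap\infty}=0$ and $\limsup_{n}\|C_{\tau}^{n}g\|_{\varphi\cap\infty}>0$. So I would fix such a hypothetical $g\neq 0$ and work toward a contradiction.

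The first and easy observation is that the $L^{\infty}$-part of the norm is monotone along the orbit. Non-singularity of $\tau$ gives $\{|C_{\tau}f|>c\}=\tau^{-1}\{|f|>c\}$, which is $\mu$-null whenever $c>\|f\|_{\infty}$; hence $\|C_{\tau}f\|_{\infty}\le\|f\|_{\infty}$ and the sequence $(\|C_{\tau}^{n}g\|_{\infty})_{n}$ is non-increasing, so it converges. Because $\|C_{\tau}^{n}g\|_{\varphi\cap\infty}\ge\|C_{\tau}^{n}g\|_{\infty}$, the assumption $\liminf_{n}\|C_{\tau}^{n}g\|_{\varphi\cap\infty}=0$ forces this monotone limit to be $0$, i.e. $\|C_{\tau}^{n}g\|_{\infty}\to0$. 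Since the sup-norm vanishes, we have $\limsup_{n}\|C_{\tau}^{n}g\|_{\varphi\cap\infty}=\limsup_{n}\|C_{\tau}^{n}g\|_{\varphi}$, so the entire proof reduces to the implication
\[
\|C_{\tau}^{n}g\|_{\infty}\to 0 \ \Longrightarrow\ \|C_{\tau}^{n}g\|_{\varphi}\to 0,
\]
whose conclusion contradicts $\limsup_{n}\|C_{\tau}^{n}g\|_{\varphi}>0$.

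To prove this implication I would pass to modulars, which is where the $\Delta_{2}$-condition is used: under $\Delta_{2}$ (for all $s$), norm convergence to $0$ in $L^{\varphi}$ is equivalent to modular convergence, so it is enough to show $I_{\varphi}(C_{\tau}^{n}g)\to 0$. Setting $F:=\varphi\circ|g|$, the $\Delta_{2}$-condition together with $g\in L^{\varphi}$ yields $F\in L^{1}(\mu)$, and by equimeasurability $I_{\varphi}(C_{\tau}^{n}g)=\int_{X}\varphi(|g\circ\tau^{n}|)\,d\mu=\int_{X}F\circ\tau^{n}\,d\mu=\int_{X}F\,d\nu_{n}$, where $\nu_{n}:=\mu\circ\tau^{-n}$. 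Now $\|C_{\tau}^{n}g\|_{\infty}\to0$ gives $s_{n}:=\|F\circ\tau^{n}\|_{\infty}=\varphi(\|C_{\tau}^{n}g\|_{\infty})\to0$, so $\nu_{n}(\{F>t\})=\mu(\{F\circ\tau^{n}>t\})=0$ for $t\ge s_{n}$, and the layer-cake formula gives $\int_{X}F\,d\nu_{n}=\int_{0}^{s_{n}}\nu_{n}(\{F>t\})\,dt$, an integral over the shrinking interval $(0,s_{n})$. The main obstacle is to control the masses $\nu_{n}(\{F>t\})=\mu(\tau^{-n}\{F>t\})$ uniformly in $t$: in infinite measure the support of $g$ may itself have infinite measure, so the crude estimate $s_{n}\,\nu_{n}(\operatorname{supp}F)$ is worthless and one must use the dynamics. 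Here I would apply Lemma \ref{lem1} to rephrase the relevant growth of $\mu(\tau^{-n}(\cdot))$, and invoke \cite[Corollary 1.3]{NC19} together with the non-atomicity and infiniteness of $\mu$ to produce a $t$-integrable majorant for $\nu_{n}(\{F>t\})$, after which dominated convergence in the variable $t$ gives $\int_{X}F\,d\nu_{n}\to0$.

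Carrying out this last estimate rigorously is the heart of the matter: the sup-norm monotonicity and the $\Delta_{2}$-reduction are routine, but the quantitative control of $\nu_{n}(\{F>t\})$—ensuring that the vanishing of $s_{n}$ truly forces the integral to vanish, rather than allowing the mass of $F$ under $\nu_{n}$ to merely migrate to ever lower levels—is exactly the step that genuinely consumes the standing hypotheses on $(X,\mathbb{A},\mu)$ and $\tau$. Once $I_{\varphi}(C_{\tau}^{n}g)\to0$ is established, $\Delta_{2}$ yields $\|C_{\tau}^{n}g\|_{\varphi}\to0$, hence $\|C_{\tau}^{n}g\|_{\varphi\cap\infty}\to0$; this contradicts $\limsup_{n}\|C_{\tau}^{n}g\|_{\varphi\cap\infty}>0$, so no semi-irregular vector exists and, by Theorem \ref{prop}, $C_{\tau}$ is not Li-Yorke chaotic.
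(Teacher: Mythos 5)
Your reduction is sound up to a point: the monotonicity $\Vert C_{\tau}f\Vert_{\infty}\le\Vert f\Vert_{\infty}$ (from non-singularity of $\tau$) combined with $\liminf_{n}\Vert C_{\tau}^{n}g\Vert_{\varphi\cap\infty}=0$ does force $\Vert C_{\tau}^{n}g\Vert_{\infty}\to0$ along the whole sequence, and this is a more careful version of what the paper itself does (the paper only records boundedness of the orbit along the subsequence $n_{j}$ where it tends to zero and declares this to contradict irregularity, which by itself it does not). But the heart of your argument --- the implication $\Vert C_{\tau}^{n}g\Vert_{\infty}\to0\Rightarrow\Vert C_{\tau}^{n}g\Vert_{\varphi}\to0$ --- is not proved, and as you have set it up it cannot be. After the layer-cake reduction you need a majorant for $t\mapsto\nu_{n}(\{F>t\})=\mu(\tau^{-n}(\{F>t\}))$ that is integrable near $0$ \emph{uniformly in $n$}, and nothing you invoke supplies one: Lemma \ref{lem1} is merely an equivalence between two inequalities (each of which may simply fail for the level sets $\{F>t\}$), and \cite[Corollary 1.3]{NC19} concerns $\liminf$/$\limsup$ relations for $\mu(\tau^{\pm n}(A))$ for a single fixed set, not uniform integrable domination over a continuum of level sets. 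Concretely, for a general non-singular $\tau$ the mass $\mu(\tau^{-n}(\{0<F\le s_{n}\}))$ can grow like $c/s_{n}$, in which case $\int_{0}^{s_{n}}\nu_{n}(\{F>t\})\,dt$ stays bounded below even though $s_{n}\to0$; this is exactly the ``migration to lower levels'' you yourself flag, and it is not excluded by anything in your sketch.

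A second, related problem is that your reduction discards the hypothesis $\liminf_{n}\Vert C_{\tau}^{n}g\Vert_{\varphi}=0$: to contradict semi-irregularity you only need $\limsup_{n}\Vert C_{\tau}^{n}g\Vert_{\varphi}=0$, but you try to extract it from $\Vert C_{\tau}^{n}g\Vert_{\infty}\to0$ alone, which is a strictly stronger statement and, absent further hypotheses on $\tau$ or quantitative control of the powers $\Vert C_{\tau}^{k}\Vert_{\varphi\cap\infty}$, is false. Any correct completion must propagate smallness from the good times $n_{j}$ (where the full $\varphi\cap\infty$ norm vanishes) to all large $n$, using either the operator norms of the iterates or some dynamical hypothesis; neither appears in your proposal. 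So the argument, like the paper's own two-line proof, stops exactly where the real work begins.
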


\begin{proof}
Suppose on the contrary that $C_{\tau}$ is Li-Yorke chaotic. Then by Theorem \ref{prop}, it admits an irregular vector $g \in L^{\varphi}(X,\mathbb{A},\mu)\cap L^{\infty}(X,\mathbb{A},\mu)$. Let $\{n_{j}\}$ be an increasing sequence of positive integers such that $C_{\tau}^{n_{j}}g \to 0$ in $L^{\varphi}(X,\mathbb{A},\mu)$. Then it converges in $L^{\varphi}$ norm also.\\
Therefore,
\begin{eqnarray*}
||C_{\tau}^{n_{j}}g||_{\varphi \cap \infty} &=& max\{||C_{\tau}^{n_{j}}g||_{\varphi},||C_{\tau}^{n_{j}}g||_{\infty}\}\\
                                        &\leq& max\{0,1\} \\
																				&\leq& 1.
\end{eqnarray*}
which contradicts the fact that $g$ is an irregular vector for $C_{\tau}$.
\end{proof}

\end{document}